\theoremstyle{plain}
\newtheorem{thm}{Theorem}[section]
\newtheorem{Example}{Example}[section]
\theoremstyle{definition}
\newtheorem{defn}{Definition}[section]
\begin{document}

\setcounter {page}{1}
%---------------Title,Author,Abstract-----------------------------------------------
\title{a note on rough statistical convergence}

\author[M. Maity]{Manojit Maity\ }
\newcommand{\acr}{\newline\indent}
\maketitle
\address{25 Teachers Housing Estate, P.O.- Panchasayar, Kolkata-700094, West Bengal, India. Email: mepsilon@gmail.com\\}

\maketitle
\begin{abstract}
In this paper, we introduce the notions of pointwise rough statistical convergence and rough statistically Cauchy sequences of real valued functions in the line of A. (T$\ddot{u}$rkmenoglu) G$\ddot{o}$khan and M. G$\ddot{u}$ng$\ddot{o}$r \cite{Tu1}. Furthermore we study thier equivalence.
\end{abstract}
\author{}
\maketitle

{ Key words and phrases :} Pointwise rough statistical convergence, rough statistically Cauchy sequence. \\

\textbf {AMS subject classification (2010) : 40A05, 40A30, 40C99} .  \\

%-------------------------Section 1- Background and introduction-----------------------
\section{\textbf{Introduction:}} The idea of statistical convergence was first introduced by Fast \cite{Fa} and also by 
Schoenberg\cite{Sc} independently, using the idea of natural density. Let $K$ be a subset of the set of positive integers $\mathbb{N}$. Let $K_n$ be a set defined as follows, $K_n = \{k \in K : k \leq n\}$. Then the natural density of $K$ is defined as $d(K) = \underset{n \rightarrow \infty}{\lim} \frac{|K_n|}{n}$, where $|K_n|$ denotes the number of elements in $K_n$. Clearly finite set has natural density zero.\\
A real sequence $\{x_n\}_{n \in \mathbb{N}}$ is said to be pointwise statistically convergent ti $\xi$ on a set $S$ if for every $\varepsilon >0$, $\underset{n \rightarrow \infty}{\lim}|\{k \leq n : |x_n - \xi| \geq \varepsilon~ \mbox{for }~\xi \in S\}| = 0$. \\
The idea of rough convergence was introduced by Phu \cite{Phu1} in finite dimensional normed linear spaces. If $x = \{x_n\}_{n \in \mathbb{N}}$ is a sequence of real numbers and $r$ is a nonnegative real number, then $x$ is said to be rough convergent to $\xi \in \mathbb{R}$ if for every $\varepsilon > 0 $ there exists $N \in \mathbb{N}$ such that 
\begin{center}
$|x_n - \xi| < r + \varepsilon$ for all  $n \geq N$.
\end{center}

A lot of work has been done in this area by Phu \cite{Phu1},\cite{Phu2}, Aytar \cite{Ay}, Pal, Chandra and Dutta \cite{Pa} , Malik and Maity \cite{Ma1}, \cite{Ma2}.\\

Recently the idea of pointwise statistical convergence of a sequence of real valued functions is introduced by G$\ddot{o}$khan and G$\ddot{u}$ng$\ddot{o}$r \cite{Tu1}. Some works in this line can be found in \cite{Ci}, \cite{Tu2}.
So it is quite natural to think about whether the notion of rough convergence can be introduced for the pointwise statistical convergence of real valued functions. In this paper we do the same and introduce the notion of pointwise rough statistical convergence of sequence of real valued functions and study some basic properties of this type of convergence.

%---------------------------------- 
\section{\textbf{Rough Statistical Convergent Sequence of Real Valued Functions}}
%------------------------------------introduction to section-2-----------------------
%---------------------------Definition 2.1------------------------------------------
\begin{defn}
A sequence of real valued functions $\{f_k\}_{k \in \mathbb{N}}$ on a set $ X $ is said to be pointwise rough statistically convergent to a function $f$ on a set $A$ if for every $ \varepsilon > 0 $ there exists a real number $ r > 0 $ such that
\begin{eqnarray*}
\underset{ n \rightarrow \infty}{\lim} \frac{1}{n} | \{ k \leq n : |{f_k}(x) - f(x)| \geq r + \varepsilon \}| = 0 
\end{eqnarray*}
for every $ x \in X $.\\
That is 
\begin{eqnarray}
|{f_k}(x) - f(x)| < r + \varepsilon ~~ a.a.k.
\end{eqnarray}
for every $ x \in X $
\end{defn}

In this case we write $ r\mbox{-}st\mbox{-}\lim {f_k}(x) = f(x) $ on $ A $. This means that for every $ \delta > 0 $ there exists an integer $ N $ such that $ \frac{1}{n}| \{ k \leq n : |{f_k}(x) - f(x)| \geq r + \varepsilon \}| < \delta $ for every $ x \in A $ and for all $ n > N (= N(\varepsilon,\delta,x)) $ and for every $ \varepsilon > 0 $.

Now we define a sequence of real valued functions which is not pointwise statistically convergent but pointwise rough statistically convergent.
%--------------------------------------Example 2.1 --------------------------------------------------------------

\begin{Example}
Define a sequence of real valued functions $\{f_i\}_{i \in \mathbb{N}}$ by
\begin{eqnarray*}
f_i(x) &=& \frac{1}{1+x^i} ~~~\mbox{if}~~~ i \neq k^2, k = 1,2,3,\cdots\\
       &=& i, ~~~\mbox{otherwise}
\end{eqnarray*}
on $[0,1]$.
\end{Example}
The sequence $ \{f_i\}_{i \in \mathbb{N}} $ is pointwise rough statistically convergent to zero with roughness degree 1.
But this sequence is not pointwise statistically convergent.
%--------------------------------------Theorem 2.1 -----------------------------------------------------------------
\begin{thm}
Let $\{f_k\}_{k \in \mathbb{N}}$ and $\{g_k\}_{k \in \mathbb{N}}$ be two sequences of real valued functions defined on a set $X$. If $r\mbox{-}st\mbox{-}\lim {f_k}(x) = f(x)$ and $r\mbox{-}st\mbox{-}\lim {g_k}(x) = g(x)$ on $X$, then $r\mbox{-}st\mbox{-}\lim({\alpha}{f_k}(x) + {\beta}{g_k}(x)) = {\alpha}{f_k}(x) + {\beta}{g_k}(x) $ for all $\alpha, \beta \in \mathbb{R}$. 
\end{thm}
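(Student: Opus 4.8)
The plan is to derive the result from the triangle inequality, reducing additivity of rough statistical convergence to the elementary fact that a finite union of sets of natural density zero again has density zero. First I would record that the right member of the asserted identity ought to read $\alpha f(x)+\beta g(x)$ rather than $\alpha f_k(x)+\beta g_k(x)$, since the theorem evidently intends convergence to the same linear combination of the \emph{limit} functions; I therefore aim to establish $r\mbox{-}st\mbox{-}\lim(\alpha f_k(x)+\beta g_k(x))=\alpha f(x)+\beta g(x)$ at each fixed $x\in X$.

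Fix $x\in X$ and $\varepsilon>0$. Applying the hypothesis to $\{f_k\}$ with tolerance $\varepsilon_1=\frac{\varepsilon}{2(|\alpha|+1)}$ produces a roughness degree $r_1>0$ for which $A_n=\{k\le n:|f_k(x)-f(x)|\ge r_1+\varepsilon_1\}$ has $\frac{1}{n}|A_n|\to 0$, and applying it to $\{g_k\}$ with $\varepsilon_2=\frac{\varepsilon}{2(|\beta|+1)}$ produces $r_2>0$ with $B_n=\{k\le n:|g_k(x)-g(x)|\ge r_2+\varepsilon_2\}$ satisfying $\frac{1}{n}|B_n|\to 0$. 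The additive shifts $|\alpha|+1$ and $|\beta|+1$ in the denominators are inserted precisely so that the splitting of $\varepsilon$ remains legitimate even when one of the coefficients vanishes.

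The key estimate is
\begin{eqnarray*}
|\alpha f_k(x)+\beta g_k(x)-\alpha f(x)-\beta g(x)|\le|\alpha|\,|f_k(x)-f(x)|+|\beta|\,|g_k(x)-g(x)|.
\end{eqnarray*}
Setting $r=|\alpha|r_1+|\beta|r_2$ (and taking $r$ to be any positive number in the degenerate case $\alpha=\beta=0$), for each $k\le n$ outside $A_n\cup B_n$ the right-hand side is at most $|\alpha|(r_1+\varepsilon_1)+|\beta|(r_2+\varepsilon_2)$, which equals $r+|\alpha|\varepsilon_1+|\beta|\varepsilon_2<r+\varepsilon$. Hence
\begin{eqnarray*}
\{k\le n:|\alpha f_k(x)+\beta g_k(x)-\alpha f(x)-\beta g(x)|\ge r+\varepsilon\}\subseteq A_n\cup B_n,
\end{eqnarray*}
so the density quotient on the left is bounded by $\frac{1}{n}|A_n|+\frac{1}{n}|B_n|\to 0$. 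As $\varepsilon>0$ was arbitrary and a suitable $r>0$ was exhibited, this is exactly the required rough statistical convergence.

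I expect the only real difficulty to be bookkeeping rather than conceptual. One must split $\varepsilon$ between the two sequences and assemble the combined roughness degree $r$ so that the final threshold comes out in the exact form $r+\varepsilon$, while guarding against the degenerate cases $\alpha=0$, $\beta=0$, or both — the $+1$ shifts and the separate choice of $r$ when $\alpha=\beta=0$ dispose of these. I would also emphasize that the entire argument is carried out at a fixed $x$, so the index $N$ beyond which the density quotient is small may depend on $x$; this is consistent with the pointwise definition, and no uniformity over $X$ is claimed or needed.
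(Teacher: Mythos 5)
Your proposal is correct and follows the same overall route as the paper --- the triangle inequality together with the fact that a union of two density-zero sets has density zero --- but it differs in precisely the details that make the argument work, and in doing so it repairs real defects in the paper's own proof. The paper asserts that $\{k\le n: |\alpha f_k(x)+\beta g_k(x)-(\alpha f(x)+\beta g(x))|\ge r+\varepsilon\}$ is contained in the union of $\{k\le n: |\alpha f_k(x)-\alpha f(x)|\ge r+\varepsilon\}$ and $\{k\le n: |\beta g_k(x)-\beta g(x)|\ge r+\varepsilon\}$, with the \emph{same} threshold $r+\varepsilon$ in all three sets; that inclusion is false as stated (take both deviations just above $(r+\varepsilon)/2$ with the same sign), and the sentence meant to justify it writes the triangle inequality with the inequality sign reversed. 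The paper also never connects the sets involving $|\alpha f_k(x)-\alpha f(x)|$ back to the hypothesis on $|f_k(x)-f(x)|$, i.e., the scaling by $|\alpha|$ and $|\beta|$ is ignored and no roughness degree for the combined sequence is ever constructed. Your proof handles exactly these points: the split tolerances $\varepsilon_1=\varepsilon/(2(|\alpha|+1))$ and $\varepsilon_2=\varepsilon/(2(|\beta|+1))$, the combined degree $r=|\alpha|r_1+|\beta|r_2$, and the correctly calibrated inclusion of the exceptional set into $A_n\cup B_n$. Two further points in your favor: you flag and fix the typo in the statement (the limit must be $\alpha f(x)+\beta g(x)$), and your reading of the quantifiers --- for every $\varepsilon>0$ there exists $r>0$ --- is exactly the paper's Definition 2.1, which is essential here, since under the more standard convention of Phu and Aytar, where $r$ is a fixed roughness degree prescribed in advance, the conclusion with the same $r$ would be false in general (one would need roughness degree $(|\alpha|+|\beta|)r$); the theorem is only true with the paper's quantifier order, which your argument respects.
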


\begin{proof}
The proof is immediate for $\alpha = 0$ and $\beta = 0$. Let $\alpha \neq 0$ and $\beta \neq 0$. Let $r>0$ be a real number. Let $\varepsilon > 0$ be given. Then \\
$$\{ k \leq n: |{\alpha}{f_k}(x) + {\beta}{g_k}(x) - ({\alpha}f(x) + {\beta}g(x))| \geq r + \varepsilon, ~~\mbox{for any}~~ x \in X\}$$
$$\subseteq \{ k \leq n: |{\alpha}{f_k}(x) - {\alpha}f(x)| \geq r + \varepsilon, ~~\mbox{for any}~~ x \in X\} \bigcup $$ 
$$\{ k \leq n: |{\beta}{g_k}(x) - {\beta}g(x))| \geq r + \varepsilon, ~~\mbox{for any}~~ x \in X\}.$$

Since $|{\alpha}{f_k}(x) + {\beta}{g_k}(x) - {\alpha}f(x) + {\beta}g(x))| \geq\ |{\alpha}{f_k}(x) - ({\alpha}f(x)| +  |{\beta}{g_k}(x) - {\beta}g(x)|$, hence we obtain $r\mbox{-}st\mbox{-}\lim ({\alpha}{f_k}(x) + {\beta}{g_k}(x)) = {\alpha}f(x) + {\beta}g(x) ~~ \mbox{on}~~ X $.
\end{proof}

Now we introduce the rough statistical analog of the Cauchy convergence criterion.

%---------------------------- Definition 2.2 ----------------------------------------------------------------

\begin{defn}
Let $\{f_k\}_{k \in \mathbb{N}}$ be a sequence of real valued functions defined on a set $X$. Let $r>0$ be a real number. The sequence $\{f_k\}_{k \in \mathbb{N}}$ is said to be rough statistically Cauchy sequence if for every $\varepsilon > 0$ there exists a natural number $N(=N(\varepsilon, x))$ such that $|{f_k}(x) - {f_N}(x)| < r + \varepsilon $ a.a.k. that is $ \underset{n \rightarrow \infty}{\lim}\frac{1}{n} |\{k \leq n : |{f_k}(x) - {f_N}(x)| \geq r + \varepsilon| = 0$ for every $ x \in X$. 
\end{defn}

%--------------------------- Theorem 2.2 -----------------------------------------------------------------------
\begin{thm}
Let $\{f_k\}_{k \in \mathbb{N}}$ be a sequence of real valued function defined on a set $X$. Then the following statements are equivalent.\\
(i) $\{f_k\}_{k \in \mathbb{N}}$ is a pointwise rough statistically convergent sequence on $X$.\\
(ii) $\{f_k\}_{k \in \mathbb{N}}$ is a rough statistically Cauchy sequence on $X$.\\
(iii) $\{f_k\}_{k \in \mathbb{N}}$ is a sequence of real valued functions on $X$ for which there exists a point wise rough statistically convergent sequence of real valued functions $\{g_k\}_{k \in \mathbb{N}}$ such that ${f_k}(x) = {g_k}(x)$ a.a.k. for every $x \in X$.
\end{thm}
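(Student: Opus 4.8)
The plan is to prove the three-way equivalence $(i)\Rightarrow(ii)\Rightarrow(iii)\Rightarrow(i)$ in a cycle, which is the standard route for Cauchy-criterion theorems and avoids proving all six implications directly. Throughout, the roughness degree $r>0$ is fixed, and because convergence is pointwise, I will work at a fixed but arbitrary $x\in X$; the natural density estimates are then carried out for that $x$, with all choices (in particular $N$) allowed to depend on $x$.

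Let me sketch each arrow. For $(i)\Rightarrow(ii)$, suppose $r\text{-}st\text{-}\lim f_k(x)=f(x)$ on $X$. Given $\varepsilon>0$, apply the convergence definition with $\varepsilon/2$ to get that $|f_k(x)-f(x)|<r+\varepsilon/2$ for almost all $k$. The set of such good indices has density $1$, so in particular it is nonempty and I may select a single index $N=N(\varepsilon,x)$ lying in it. The key step is then the inclusion
\begin{eqnarray*}
\{k\le n:|f_k(x)-f_N(x)|\ge 2r+\varepsilon\}\subseteq\{k\le n:|f_k(x)-f(x)|\ge r+\tfrac{\varepsilon}{2}\},
\end{eqnarray*}
obtained from the triangle inequality $|f_k(x)-f_N(x)|\le|f_k(x)-f(x)|+|f(x)-f_N(x)|$ together with $|f_N(x)-f(x)|<r+\varepsilon/2$. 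Since the right-hand set has density zero, so does the left, giving the rough statistically Cauchy condition (with roughness degree $2r$, which is harmless since $r$ is an arbitrary positive real; I would note this absorption of constants explicitly).

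For $(ii)\Rightarrow(iii)$, the approach is to manufacture the comparison sequence $\{g_k\}$ directly. Assuming $\{f_k\}$ is rough statistically Cauchy, for each $\varepsilon>0$ the indices $k$ with $|f_k(x)-f_N(x)|<r+\varepsilon$ form a density-one set; intersecting such sets along a decreasing sequence $\varepsilon_j\downarrow0$ (the usual diagonal construction for statistical convergence) produces a density-one index set on which the $f_k(x)$ are controlled, and one defines $g_k(x)$ to agree with $f_k(x)$ on this set and to take a fixed convergent value off it, so that $f_k(x)=g_k(x)$ for almost all $k$ and $\{g_k\}$ is pointwise rough statistically convergent. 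Finally $(iii)\Rightarrow(i)$ is the easiest: if $f_k(x)=g_k(x)$ a.a.k.\ and $g_k(x)$ is rough statistically convergent to some $g(x)$, then the set where $|f_k(x)-g(x)|\ge r+\varepsilon$ is contained in the union of the (density-zero) set where $f_k(x)\ne g_k(x)$ and the (density-zero) set where $|g_k(x)-g(x)|\ge r+\varepsilon$, so it too has density zero, whence $r\text{-}st\text{-}\lim f_k(x)=g(x)$.

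The main obstacle I anticipate is the diagonal construction in $(ii)\Rightarrow(iii)$: turning a family of density-one sets (one for each $\varepsilon_j$) into a single density-one set along which the $g_k$ can be defined requires the standard lemma that a countable nested family of density-one sets admits a common density-one ``thinning,'' and one must verify that the pointwise, $x$-dependent nature of $N$ does not obstruct this—each $x$ is handled separately, so the construction is carried out fiberwise and no uniformity in $x$ is needed. Everything else reduces to the triangle inequality and the elementary fact that a finite union of density-zero sets has density zero, so those steps I would state briefly rather than belabor.
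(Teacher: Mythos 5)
Your overall plan---the cycle (i)$\Rightarrow$(ii)$\Rightarrow$(iii)$\Rightarrow$(i)---is exactly the paper's, and your arguments for (i)$\Rightarrow$(ii) and (iii)$\Rightarrow$(i) are sound and essentially the same as the paper's (triangle inequality, plus the fact that a union of two density-zero sets has density zero). In fact your bookkeeping in (i)$\Rightarrow$(ii) is more careful than the paper's: the paper asserts that $r$-statistical convergence gives $|f_k(x)-f(x)|<\frac{r+\varepsilon}{2}$ a.a.k., which does not follow from the definition (for small $\varepsilon$ this bound is smaller than $r$), whereas your passage to degree $2r$ is genuinely unavoidable (a sequence alternating between $0$ and $2r$ is $r$-convergent to $r$ but not $r$-Cauchy) and is harmless for the theorem as stated, since (i)--(iii) do not fix a common roughness degree.

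The genuine gap is in (ii)$\Rightarrow$(iii). You define $g_k(x)$ to equal $f_k(x)$ on a density-one index set and ``a fixed convergent value'' off it---but the existence of an appropriate value is precisely the content of this implication, and your sketch contains no mechanism for producing it. For $\{g_k\}$ to be pointwise rough statistically convergent, the constant you insert must be a point $f(x)$ around which the $f_k(x)$, for $k$ in your density-one set, remain within roughly $2r$; this is where completeness of $\mathbb{R}$ must enter, and neither the diagonal lemma on density-one sets nor the triangle inequality supplies it. The paper produces $f(x)$ by the classical Fridy construction: nested closed bands $I_1\supseteq I_2\supseteq\cdots$, the height of $I_m$ at most $2^{1-m}$, each containing $f_k(x)$ a.a.k., whose intersection is the single point $f(x)$; then $g_k$ is obtained from $f_k$ by replacing the deviant terms with $f$. (In the rough setting even this is delicate, and the paper glosses over it: the rough Cauchy hypothesis only yields bands of width $2(r+\varepsilon)\ge 2r$, which do not shrink to zero, so the shrinking-interval argument cannot be run literally; one must instead take a point in the intersection of all the bands, which is nonempty by compactness and the finite intersection property because any finitely many bands jointly contain $f_k(x)$ a.a.k., and the resulting $g_k$ is then rough convergent with degree $2r$ rather than convergent in the ordinary sense.) Until you supply some such construction of the limit value, your (ii)$\Rightarrow$(iii) restates what is to be proved rather than proving it.
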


\begin{proof}
(i) implies (ii)\\

Let $r > 0$ be a real number. Let $ r\mbox{-}st\mbox{-}\lim{f_k}(x) = f(x)$. Then for every $\varepsilon > 0$ $ |{f_k}(x) - f(x)| < \frac{r + \varepsilon}{2}$ for every $x \in X$ a.a.k. Let $N$ be a natural number so chosen that $|{f_N}(x) - f(x)| < \frac{r + \varepsilon}{2}$ for every $x \in X$. Then we have $|{f_k}(x) - {f_N}(x)| \leq |{f_k}(x) - f(x)| + |{f_N}(x) - f(x)| < {\frac{r + \varepsilon}{2}} + {\frac{r + \varepsilon}{2}} = {r + \varepsilon} $ a.a.k. for every $x \in X$. Hence $\{f_k\}_{k \in \mathbb{N}}$ is a rough statistically Cauchy sequence.\\

(ii) implies (iii)\\

Assume that (ii) is true. Choose a natural number $N$ such that the band $[{f_N}(x)-1,{f_N}(x)+1] = I$ contains ${f_k}(x)$ a.a.k. for every $x \in X$. By (ii) we get a natural number $M$ such that $[{f_M}(x)- \frac{1}{2}, {f_M}(x)+  \frac{1}{2}] = I^{'}$ contains ${f_k}(x)$ a.a.k. for every $x \in X$. Hence $ I \cap I^{'} = I_1 $ contains ${f_k}(x)$ a.a.k. for every $ x \in X $. Now $\{k \leq n : {f_k}(x) \notin I \cap I^{'} , ~~ \mbox{for every}~~ x \in X \} = \{k \leq n : {f_k}(x) \notin I ,~~ \mbox{for every} ~~ x \in X \} \bigcup \{k \leq n : {f_k}(x) \notin I^{'} ,~~ \mbox{ for every} ~~ x \in X \}$.\\
Hence $ \underset{n \rightarrow \infty}{\lim} \frac{1}{n}|\{k \leq n: {f_k}(x) \notin I \cap I^{'} ~~ \mbox{for every} x \in X \}| \leq \underset{n \rightarrow \infty}{\lim} \frac{1}{n}|\{k \leq n: {f_k}(x) \notin I ~~ \mbox{for every} x \in X \}| + \underset{n \rightarrow \infty}{\lim} \frac{1}{n}|\{k \leq n: {f_k}(x) \notin I^{'} ~~ \mbox{for every} x \in X \}| = 0 $.\\
Therefore $ I_1 $ is a closed band of height less than or equal to 1 which contains $ {f_k}(x) $ a.a.k. for every $ x \in X $.\\
Proceeding in the same manner we can get $ N_2 $ such that $ I^{''} = [{f_{N_2}}(x) - \frac{1}{4} , {f_{N_2}}(x) + \frac{1}{4}] $ contains $ {f_k}(x) $ a.a.k. and by above argument $ I \cap I^{''} = I_2 $ contains $ {f_k}(x) $ a.a.k for every $ x \in X $ and the height of $ I_2 $ is less than or equal to $ \frac{1}{2} $. By induction principle we can costruct a sequence $ \{I_m\}_{m \in \mathbb{N}} $ of closed band such that for each $ m $, $ I_m \supseteq I_{m+1} $ and the height of $ I_m $ is not greater than $ 2^{1-m} $ and $ {f_k}(x) \in I_m $ a.a.k. for every $ x \in X $. Thus there exists a function $ f(x) $ defined on $ X $ such that $ \{ f(x) \} $ is equal to $ \bigcap \limits_{m=1}^{\infty}{I_m} $. Using the fact that $ {f_k}(x) \in I_m $ a.a.k. for every $ x \in X $, we can choose an increasing positive integer sequence $\{J_m\}_{m \in \mathbb{N}}$ such that
\begin{eqnarray}
\frac{1}{n}|\{k \leq n:{f_k}(x) \notin I_m ~~\mbox{ for every}~~ x \in X \}| < \frac{1}{m} ~~\mbox{if}~~ n > J_m
\end{eqnarray}
Now construct a subsequence $ \{{h_k}(x)\} $ of $ \{{f_k}(x)\} $ containing of all terms $ {f_k}(x) $ such that $ k > J_1 $ and when $ J_m < k \leq J_{m+1} $ then $ {f_k}(x) \notin I_m $ for every $ x \in X $.

Define a sequence of real valued function $ \{ {g_k}(x) \} $ by 
\begin{eqnarray*}
{g_k}(x) &=& f(x), ~~ \mbox{if}~~ {f_k}(x) ~~\mbox{ is a term of }~~ \{{h_k}(x)\} \\
         &=& {f_k}(x), ~~ \mbox{otherwise}
\end{eqnarray*}
for every $ x \in X $. Then $ \underset{ k \rightarrow \infty }{\lim} {g_k}(x) = f(x) $ on $X$. If $ \varepsilon > \frac{1}{m} > 0 $ and $ k > J_m $, then either $ {f_k}(x) $ is a term of $\{{h_k}(x)\}$, which means
\begin{eqnarray*}
            {g_k}(x) &=& f(x) ~~ \mbox{on}~~ X \\
\mbox{or}~~ {g_k}(x) &=& {f_k}(x) \in I_m  ~~\mbox{on}~~ X.
\end{eqnarray*} 
and $|{g_k}(x) - f(x)| \leq ~~\mbox{height of }~~ I_m \leq 2^{1-m}$ for every $x \in X$. 

Now if $ J_m < n < J_{m+1} $, then 
\begin{eqnarray*}
\{k \leq n:{g_k}(x) \neq {f_k}(x) ~~\mbox{for every}~~ x \in X \} \subseteq \{k \leq n:{f_k}(x) \notin I_m ~~\mbox{for every}~~ x \in X \}
\end{eqnarray*}
so by (10)\\
\begin{eqnarray*}
\frac{1}{n} |\{k \leq n:{g_k}(x) \neq {f_k}(x) ~~\mbox{for every}~~ x \in X \}|
&\leq& \frac{1}{n} |\{k \leq n:{f_k}(x) \notin I_m ~~\mbox{for every}~~ x \in X \}|\\ 
&<&\frac{1}{m}.
\end{eqnarray*}
By taking limit $ n \rightarrow \infty $ we get the limit is 0 and consequently we say that ${f_k}(x) = {g_k}(x)$ a.a.k. for every $x \in X$. Therefore (ii) implies (iii).\\

(iii) implies (i)\\

Let us assume that (iii) holds. That is $ {f_k}(x) = {g_k}(x) $ a.a.k. for every $ x \in X $ and $\underset{ k \rightarrow \infty}{\lim} {g_k}(x) = f(x)$ on $X$. Let $ r > 0 $ be real number and $\varepsilon > 0$. Then for every $n$, $\{k \leq n : |{f_k}(x) - f(x)|\geq r + \varepsilon ~~\mbox{ for every }~~ x \in X \} \subseteq \{k \leq n: {f_k}(x) \neq {g_k}(x) ~~\mbox{ for every } ~~ x \in X \} \bigcup \{k \leq n : |{g_k}(x) - f(x)| \geq r + \varepsilon ~~\mbox{ for every }~~ x \in X \}$; 
since $ \underset{k \rightarrow \infty}{\lim}{g_k}(x) = f(x) $ on $X$, the later set contains a finite number of integers, say $ p = p(\varepsilon,x) $.\\
Therefore since ${f_k}(x) = {g_k}(x)$ a.a.k for every $ x \in X $ we get
\begin{center}
$\underset{ n \rightarrow \infty}{\lim} \frac{1}{n}|\{k \leq n:|{f_k}(x) - f(x)| \geq r + \varepsilon ~~ \mbox{ for every } ~~ x \in X \}|$ \\
$\leq \underset{n \rightarrow \infty}{\lim} \frac{1}{n}|\{k \leq n:{f_k}(x) \neq {g_k}(x)| ~~ \mbox{ for every } ~~ x \in X \}| + \underset{ n \rightarrow \infty}{\lim} \frac{p}{n}= 0$
\end{center}
Hence $ |{f_k}(x)-f(x)| < r + \varepsilon $ a.a.k for every $ x \in X $, so (i) holds.
\end{proof}

%------------------------------Acknowledgement -------------------------------------------------------------------
Acknowledgement: I express my gratitude to Prof. Salih Aytar, Suleyman Demirel University, Turkey, for his paper entitled ``Rough Statistical Convergence'' which inspired me to prepare and develope this paper. I also express my gratitude to Prof. Pratulananda Das, Jadavpur University, India and Prof. Prasanta Malik, Burdwan University, India for their advice in preparation of this paper.
%-------------------------------References----------------------------------------------------------------------

\end{document}